\pgfplotsset{compat=1.12}
\newtheoremstyle{dotless}{}{}{}{}{\bfseries}{}{ }{}
\theoremstyle{definition}
\newtheorem{sthm}{Theorem}[section]
\newtheorem{slem}[sthm]{Lemma}
\newtheorem{scor}[sthm]{Corollary}
\newtheorem{sprop}[sthm]{Proposition}
\newtheorem{srmk}[sthm]{Remark}
\newtheorem{sex}[sthm]{Example}
\newtheorem{sqn}[sthm]{Question}
\newtheorem{sdef}[sthm]{Definition}
\theoremstyle{dotless}
\newtheorem*{prob2}{Problem}
\newcommand{\mf}[1]{\mathfrak{#1}}
\newcommand{\mb}[1]{\mathbb{#1}}
\newcommand{\on}[1]{\operatorname{#1}}
\newcommand{\im}{\on{im}}
\newcommand{\spec}{\on{Spec}}
\renewcommand{\phi}{\varphi}
\renewcommand{\dot}{\bullet}
\newcommand{\ann}{\on{Ann}}
\newcommand{\mbn}{\mb{N}}
\newcommand{\mbz}{\mb{Z}}
\newcommand{\ass}{\on{Ass}}
\newcommand{\mfp}{\mathfrak{p}}
\newcommand{\mfq}{\mathfrak{q}}
\newcommand{\mfm}{\mathfrak{m}}
\newcommand{\mfb}{\mf{b}}
\tikzset{n/.style={draw,circle,fill=white}}
\tikzset{w/.style={rectangle,fill=white}}
\tikzset{lab/.style={rectangle,draw}}
\tikzset{d/.style={draw,circle,fill,scale=.5}}
\tikzset{od/.style={draw,circle,scale=.5}}
\newcommand{\fbp}[1]{\left[ #1\right]}
\newcommand{\hsl}{\on{HSL}}
\newcommand{\hslr}{\on{HSL}_R}
\newcommand{\fte}{\on{Fte}}
\newcommand{\specnot}{\spec^\circ}
\newcommand{\assnot}{\ass^\circ}
\begin{document}
\title{A sufficient condition for finiteness of Frobenius test exponents}
\author{Kyle Maddox}
\date{18 April, 2019}
\maketitle\vspace*{-24pt}
\begin{abstract}
\noindent The Frobenius test exponent $\fte(R)$ of a local ring $(R,\mfm)$ of prime characteristic $p>0$ is the smallest $e_0 \in \mbn$ such that for every ideal $\mfq$ generated by a (full) system of parameters, the Frobenius closure $\mfq^F$ has $(\mfq^F)^{\fbp{p^{e_0}}}=\mfq^{\fbp{p^{e_0}}}$. We establish a sufficient condition for $\fte(R)<\infty$ and use it to show that if $R$ is such that the Frobenius closure of the zero submodule in the lower local cohomology modules has finite colength, i.e. $H^j_\mfm(R)/0^F_{H^j_\mfm(R)}$ is finite length for $0 \le j < \dim(R)$, then $\fte(R)<\infty$. 
\end{abstract}
\section*{Introduction}

Let $(R,\mfm)$ be a commutative Noetherian local ring of characteristic $p>0$. The Frobenius closure of an ideal $I\subset R$ is defined to be the ideal $I^F = \{ x \in R \mid x^{p^e} \in I^{\fbp{p^e}} \text{ for some } e\in \mbn\}$. In general, computing the Frobenius closure of an ideal should be expected to be difficult, because we must check infinitely many equations for every element of the ring. However, Frobenius bracket powers are much simpler to compute. Since $R$ is Noetherian we must have an $e_0 \in \mbn$ such that $(I^F)^{\fbp{p^e}} = I^{\fbp{p^e}}$ for all $e \ge e_0$, so we can simply check one equation -- $x \in I^F$ if and only if $x^{p^{e_0}} \in I^{\fbp{p^{e_0}}}$. However, computing the required $e_0$ for each $I$ might also be difficult, so it would be desirable to get uniform bounds for all $I$ depending only on the ring.

One cannot expect uniform behavior like this even in nice rings -- Brenner \cite{B} showed that in a two-dimensional domain standard graded over a field we can have a sequence of ideals where the required exponent tends to infinity. However, some finiteness results are known if we restrict to the class of parameter ideals -- the \textbf{Frobenius test exponent} for (parameter ideals of) $R$ is the smallest $e_0$ such that for any $\mfq\subset R$ a parameter ideal, $(\mfq^F)^{\fbp{p^{e_0}}} = \mfq^{\fbp{p^{e_0}}}$. 

Katzman and Sharp \cite{KS} showed that $\fte(R)<\infty$ if $R$ is Cohen-Macaulay. The same year, Huneke, Katzman, Sharp, and Yao \cite{HKSY} showed $\fte(R)<\infty$ if $R$ is generalized Cohen-Macaualy using some very involved techniques. More recently, Quy \cite{Q} introduced a new technique which vastly simplified the proof for generalized Cohen-Macaulay rings and also showed F-nilpotent rings have finite Frobenius test exponent. Quy's proofs suggest a sufficient condition for finiteness of the Frobenius test exponent (see \Cref{the condition}), and we can extend his techniques to show that a new class of F-singularity which we call \textbf{generalized weakly F-nilpotent rings} (see \Cref{defn gwfnilp} and \Cref{gwfncase}) also have finite Frobenius test exponent.\\

\noindent \textbf{Acknowledgments} \textit{The author would like to thank his advisor Ian Aberbach for suggesting generalized F-nilpotent rings as a case study for the sufficient condition and countless corrections and ideas. Also, the author would like to thank Thomas Polstra and Pham Hung Quy for looking over initial drafts of this paper and providing a multitude of helpful comments, as well as the anonymous referee for many useful suggestions.} \newpage

\noindent \textbf{Notation and conventions:} Throughout, $(R,\mfm)$ will be a Noetherian local ring of dimension $d$ and of prime characteristic $p>0$. By a parameter ideal, we will mean an ideal generated by a full system of $d$ parameters. Write $\specnot(R)=\spec(R)\setminus \{\mfm\}$ and for any subset $X\subset \spec(R)$, write $X^\circ = X \cap \specnot(R)$. In particular, $\assnot_R(M)=\ass_R(M)\cap \specnot(R)$. The set $\mbn$ contains $0$ and $\mbz_+$ will be used for the set of positive integers.

If $x_1,\cdots,x_t$ is an (ordered) sequence of elements of $R$, write $\uline{x}=x_1,\cdots,x_t$ for the list of elements. Given $\uline{x}=x_1,\cdots,x_t$ and a sequence $n_1,\cdots, n_t \in \mbz_+$, write $\uline{x}^{\uline{n}} = x_1^{n_1},\cdots,x_t^{n_t}$ for the new sequence obtained by taking powers. In particular, if $n \in \mbz_+$ then $\uline{x}^n = x_1^n,\cdots,x_t^n$. If a sequence $\uline{x}=x_1,\cdots,x_t$ is given and $J=(\uline{x})$, then we write $J_i=(x_1,\cdots,x_i)$  for $1 \le i \le t$. Set $J_0=0$. 

\section{Background}
\subsection{Frobenius closure of an ideal and Frobenius test exponents}

Throughout this subsection, let $R$ be Noetherian and of prime characteristic $p>0$.

\begin{sdef}\label{frob closure defn}
Let $I\subset R$ be an ideal. The \textbf{Frobenius closure of $I$} is the ideal: \[ I^F=\left\lbrace x \in R \mid x^{p^e} \in I^{\fbp{p^e}} \text{ for all } e \gg 0\right\rbrace. \] Call $I$ \textbf{Frobenius closed} if $I^F=I$. 
\end{sdef}

Clearly $I\subset I^F$, and $I^F$ is Frobenius closed. Furthermore, if $I\subset J$, then $I^F \subset J^F$ as for any $e\in \mbn$, $I^{\fbp{p^e}}\subset J^{\fbp{p^e}}$.

\begin{sdef}\label{ftedefn}
Let $I\subset R$ be an ideal. Since $R$ is Noetherian, there is an $e_0 \in \mbn$ such that $(I^F)^{\fbp{p^e}}= I^{\fbp{p^e}}$ for all $e \ge e_0$. Call the smallest such exponent the \textbf{Frobenius test exponent for $I$}, and denote it $\fte(I)$.
\end{sdef}

It is natural to desire an $e_0 \in \mbn$ such that for all ideals $I\subset R$, $\fte(I)\le e_0$. Unfortunately, this is too much to ask even for nice, low dimensional rings -- Brenner \cite{B} gives a counterexample in a two-dimensional normal graded domain. 

There are cases where uniform bounds on the required exponent for \textit{parameter ideals} are known. In particular, Katzmann and Sharp \cite{KS} showed that if $(R,\mfm)$ is a Cohen-Macaulay local ring of dimension d and prime characteristic $p>0$,  and $\mfq\subset R$ a parameter ideal, then $\fte(\mfq) \le \hsl(H^d_\mfm(R))$ (see \Cref{hsl numbers defn}).  

\begin{sdef}
We define the \textbf{Frobenius test exponent (for parameter ideals) of $R$} to be: \[ \fte(R) = \inf \left\lbrace e \in \mbn \mid (\mfq^F)^{\fbp{p^e}} = \mfq^{\fbp{p^e}} \text{ for all parameter ideals } \mfq \subset R \right\rbrace \in \mbn \cup \{\infty\}.\]
\end{sdef}

This number is a coarse measure of singularity in characteristic $p$ -- if $R$ is regular than $\fte(R)=0$, however there are non-regular rings with $\fte(R)=0$ -- for instance, F-injective Cohen-Macaulay rings or F-pure rings. The authors of \cite{QS} define $R$ to be \textbf{parameter F-closed} when $\fte(R)=0$, and so $\fte(R)$ is a measure of how close $R$ is to being parameter F-closed.

\begin{sqn} 
For which local rings $R$ of prime characteristic $p>0$ is $\fte(R)<\infty$?
\end{sqn}

As mentioned in the introduction, the following cases were known previously.

\begin{itemize}
\item \cite{KS} Cohen-Macaulay rings
\item \cite{HKSY} Generalized Cohen-Macaulay rings (see \Cref{gcm defn})
\item \cite{Q} Weakly F-nilpotent rings (see \Cref{wfn defn})
\end{itemize}

In Section 3, we will extend this list to include a new class of F-singularity, called generalized weakly F-nilpotent rings (\Cref{gwfncase}) and recapture the previous cases as corollaries. 

\subsection{Filter regular sequences}
In this subsection, the assumption that $R$ is of prime characteristic is unnecessary.

We will regularly use the notion of filter regular sequences throughout this paper, so we cover some basic properties here. Filter regular sequences are a generalization of regular sequences, and we will see that every parameter ideal can be generated by a filter regular system of parameters. This allows many proofs for Cohen-Macaulay rings to work (with minor modifications) in essentially any local ring. We will use this to create a powerful long exact sequence in local cohomology.

\begin{sdef}
An element $x \in R$ is \textbf{filter regular} or \textbf{$\mfm$-filter regular} if $x \in \mfm$ and $x \not \in \mfp$ for any $\mfp \in \assnot_R(R)$. A sequence $\uline{x}=x_1,\cdots,x_t$ is a \textbf{filter regular sequence} if $x_1$ is filter regular, $x_2+x_1R$ is filter regular in $R/x_1R$, and so on -- equivalently that $x_i \not \in \mfp$ for all $\mfp \in \assnot_R(R/(x_1,\cdots,x_{i-1}))$.
\end{sdef}

\begin{srmk}The sequence $\uline{x} = x_1,\cdots,x_t$ is filter regular if and only if $\uline{x}^{\uline{n}}=x_1^{n_1},\cdots,x_t^{n_d}$ is a filter regular sequence for any $\uline{n} \in (\mbz_+)^t$.\end{srmk}

\begin{sprop}
Let $\mfq\subset R$ be a parameter ideal. Then, there is a filter regular system of parameters $\uline{x}=x_1,\cdots,x_d$ such that $\mfq=(\uline{x})$.
\end{sprop}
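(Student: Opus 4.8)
The plan is to construct the filter regular system of parameters generating $\mfq$ by a prime avoidance argument applied one generator at a time. Suppose $\mfq = (y_1,\dots,y_d)$ for some system of parameters $\underline{y}$. We will replace the $y_i$ one at a time, in order, with new elements $x_i$ so that $(x_1,\dots,x_i,y_{i+1},\dots,y_d)$ remains equal to $\mfq$ at every stage --- this is automatic if $x_i \equiv y_i$ modulo the ideal generated by the other listed generators --- and so that $x_1,\dots,x_i$ is a filter regular sequence.

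First I would handle the base case: I need $x_1 \in \mfm$ with $x_1 \notin \mfp$ for every $\mfp \in \assnot_R(R)$, and simultaneously $x_1 \equiv y_1$ modulo $(y_2,\dots,y_d)$ so that $(x_1,y_2,\dots,y_d) = \mfq$. The key observation is that no prime in $\assnot_R(R)$ can contain $\mfq$, since $\mfq$ is $\mfm$-primary and such a prime is strictly contained in $\mfm$; hence for each $\mfp \in \assnot_R(R)$ there is an index $j = j(\mfp)$ with $y_j \notin \mfp$. Then I apply prime avoidance to the coset $y_1 + (y_2,\dots,y_d)$ --- more precisely, the standard lemma that if an ideal (here $(y_2,\dots,y_d)$, or rather the coset) is not contained in any of finitely many primes after adjusting, one can find a representative avoiding all of them --- to produce $x_1 = y_1 + \sum_{j\ge 2} c_j y_j$ lying outside every $\mfp \in \assnot_R(R)$. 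Since $\assnot_R(R)$ is finite (as $R$ is Noetherian) and none of these primes is maximal, this prime avoidance goes through. The inductive step is identical, carried out in the ring $R/(x_1,\dots,x_{i-1})$: the images $y_i,\dots,y_d$ still generate an $\mfm$-primary ideal there, so they cannot all lie in any given $\mfp \in \assnot^\circ_{R/(x_1,\dots,x_{i-1})}$, and prime avoidance again yields $x_i \equiv y_i$ modulo $(y_{i+1},\dots,y_d)$ lying outside all such associated primes. Lifting back to $R$, we have arranged $(x_1,\dots,x_i,y_{i+1},\dots,y_d) = \mfq$ and $x_1,\dots,x_i$ filter regular.

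After $d$ steps we obtain $\underline{x} = x_1,\dots,x_d$ with $(\underline{x}) = \mfq$ and $\underline{x}$ a filter regular sequence; since $\mfq$ is a parameter ideal and $\underline{x}$ generates it with exactly $d$ elements, $\underline{x}$ is automatically a system of parameters. The main thing to be careful about --- the only real obstacle --- is verifying at each stage that the relevant set of non-maximal associated primes does not ``swallow'' the coset we are modifying: this is exactly where we use that $\mfq$ (and its images) is $\mfm$-primary, so that no proper prime contains all the current generators, which is what makes the prime-avoidance step legitimate for the coset rather than just for an ideal. Everything else is bookkeeping to ensure the generated ideal is unchanged at each replacement.
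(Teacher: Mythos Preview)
Your argument is correct and follows the same overall strategy as the paper: build the sequence one element at a time by prime avoidance, passing to $R/(x_1,\dots,x_{i-1})$ at each step. The only technical difference is in how generation of $\mfq$ is maintained. The paper simply picks each $x_i$ in $\mfq \setminus \bigl(\mfm\mfq \cup \bigcup_{\mfp \in \assnot_R(R)} \mfp\bigr)$, so that the $x_i$ are forced to be \emph{minimal} generators of $\mfq$, and concludes $(\uline{x})=\mfq$ by Nakayama at the end; you instead use the coset (Davis) form of prime avoidance to keep $x_i \equiv y_i \bmod (y_{i+1},\dots,y_d)$, so the equality $(\uline{x})=\mfq$ is preserved step by step. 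Both devices are standard and equally short; the paper's version has the minor advantage that it invokes only ordinary prime avoidance (with one non-prime ideal $\mfm\mfq$), while yours avoids the Nakayama step.
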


\begin{proof}
If $d=0$, there is nothing to prove. Otherwise, pick the first parameter:\[x_1 \in \mfq \setminus \displaystyle \left(\mfm\mfq \cup \bigcup_{\mfp \in \assnot_R(R)} \mfp \right),\] which is a nonempty set by prime avoidance. Then we repeat in $R/x_1R$. After we have selected $d$ such elements, we have $d$ minimal generators $\uline{x}=x_1,\cdots,x_d$ of $\mfq$, a parameter ideal, so $(\uline{x})=\mfq$.
\end{proof}

\begin{sprop}\label{filter regular les}
Let $I\subset R$ be an ideal and $\uline{x}=x_1,\cdots,x_t$ a filter regular sequence in $R$ with $J=(\uline{x})$. (Recall $J_i = (x_1,\cdots,x_i)$.) Then $(J_{i-1}:_R x_i)/J_{i-1}$ is finite length over $R$ for each $1 \le i \le t$, so the short exact sequence: \begin{center}\begin{tikzcd} 
0 \arrow{r} & R/(J_{i-1}:x_i) \arrow{r}{\cdot x_i} & R/J_{i-1} \arrow{r}{\pi} & R/J_i \arrow{r} & 0 \end{tikzcd}\end{center} induces the following long exact sequence in local cohomology when $j>0$: \begin{center}
\begin{tikzpicture}
\node[scale=.96] at (0,0) {\begin{tikzcd}
\cdots \arrow{r} & H^j_I(R/J_{i-1}) \arrow{r}{\cdot x_i} & H^j_I(R/J_{i-1}) \arrow{r}{\alpha} & H^j_I(R/J_i) \arrow{r}{\beta} & H^{j+1}_I(R/J_{i-1})\arrow{r}{\cdot x_i} & \cdots
\end{tikzcd} };
\end{tikzpicture}
\end{center}
\end{sprop}

\begin{proof}
Since $x_i+J_{i-1} \not \in \mfp/J_{i-1}$ for any $\mfp \in \assnot_{R} (R/J_{i-1})$, this forces $\ass_R((J_{i-1}:_R x_i)/J_{i-1})\subset \{ \mfm\}$. Then since $(J_{i-1}:x_i)/J_{i-1}$ is finitely generated, it must be finite length. Consequently, $H^j_I((J_{i-1}:_R x_i)/J_{i-1})=0$ for any $j>0$, and so the short exact sequence: \begin{center}
\begin{tikzcd}
0 \arrow{r} & (J_{i-1}:_R x_i))/J_{i-1} \arrow{r} & R/J_{i-1} \arrow{r} & R/(J_{i-1}:_R x_i)) \arrow{r} & 0
\end{tikzcd}
\end{center} gives $H^j_I(R/J_{i-1}) \simeq H^j_I(R/(J_{i-1}:_R x_i)))$ for any $j>0$. Then, simply apply $H^j_I(\dot)$ to the short exact sequence in the statement of the proposition.
\end{proof}

\section{Frobenius actions on modules}

We now return to the prime characteristic case. Frobenius actions on Artinian modules and the dual theory of Cartier actions on finitely-generated modules have been studied extensively in recent literature. We will use the canonical Frobenius action on local cohomology to control the Frobenius test exponent of the ring.

\subsection{Basics of Frobenius actions}

\begin{sdef}
Let $M$ and $N$ be $R$-modules and let $\alpha: M \rightarrow N$ be an abelian group homomorphism. Say $\alpha$ is \textbf{$p^e$-linear} for some $e \in \mbn$ if $\alpha(xm)=x^{p^e}\alpha(m)$ for any $x \in R$ and $m \in M$. A $p$-linear endomorphism $f$ on $M$ is called a \textbf{Frobenius action on $M$}. If $M=R$, then there is a standard choice of $f$ -- the Frobenius endomorphism $F(r)=r^p$. Throughout this paper, any ring of prime characteristic will always be considered to have this choice of Frobenius action.
\end{sdef}

\begin{sdef}
Let $M$ be an $R$-module with a Frobenius action $f$. A submodule $M' \subset M$ is an \textbf{$f$-submodule} if $f(M')\subset M'$. If $M'$ is an $f$-submodule, we can define the \textbf{Frobenius orbit closure of $M'$} to be $(M')^f_M=\{m \in M \mid f^e(m) \in M' \text{ for some } e \in \mbn\}$. Clearly if $M'\subset M$ is an $f$-submodule, then $f|_{M'}: M' \rightarrow M'$ is a Frobenius action on $M'$.
\end{sdef}

\begin{srmk} There is also a notion of Frobenius closure for submodules generalizing the Frobenius closure of ideals in \Cref{frob closure defn} that is distinct from this sort of Frobenius closure (see \cite{PQ} for a definition). For instance, considering $I\subset R$, $I^F_R = \sqrt{I}$ whereas the Frobenius closure $I^F$ of $I$ is usually strictly smaller than $\sqrt{I}$.
\end{srmk}

\begin{sdef}
Let $M$ and $N$ be $R$-modules with Frobenius actions $f_M$ and $f_N$ respectively. An $R$-linear map $\alpha: M \rightarrow N$ \textbf{commutes with Frobenius} if $f_N \circ \alpha = \alpha \circ f_M$. 
\end{sdef}

\begin{sprop}\label{basics of frob actions}
Let $M$, $M'$, $N$, and $N'$ be $R$-modules with Frobenius actions, and let $\alpha: M \rightarrow N$ and $\beta: M' \rightarrow N'$ be maps which commute with Frobenius. Furthermore, let $N''\subset N$ be an $f$-submodule. Then:
\begin{enumerate}[label=\alph*)]
\item $\im(\alpha)$ and $\ker(\alpha)$ are $f$-submodules.
\item $N/N''$ has an unique Frobenius action such that the projection map $\pi: N \rightarrow N/N''$ commutes.
\item $\alpha(0^f_M)\subset 0^f_N$, and  moreover $\alpha^{-1}((N'')^f_{N}) = (\alpha^{-1}(N''))^f_M$. This implies $(N'')^f_N = \pi^{-1}\left(0^f_{N/N''}\right)$. 
\item $M \oplus M'$ has an induced Frobenius action commuting with inclusion and projection of the summands and furthermore, $\alpha \oplus \beta: M\oplus M' \rightarrow N \oplus N'$ commutes with this action. 
\end{enumerate}
\end{sprop}

\begin{sex}
Let $S$ be another characteristic $p$ ring and $\phi: R \rightarrow S$ be a ring homomorphism. Then $\phi$ commutes with Frobenius: $\phi(F(x))=\phi(x^p)=\phi(x)^p=F(\phi(x))$. 
\end{sex} 

\begin{sex}\label{frob action local cohomology}
The \v{C}ech cocomplex $\check{C}^j(\uline{x};R)$ on the elements $\uline{x}=x_1,\cdots,x_t\in R$ has the local cohomology modules for $(\uline{x})$ as its cohomology. Recall \[\check{C}^j: = \bigoplus_{1 \le i_1 < \cdots < i_j \le t} R_{x_{i_1}\cdots x_{i_j}}\] which has a natural Frobenius action by \Cref{basics of frob actions}. Furthermore, the maps in the cocomplex commute with these actions, and consequently each cohomology module $H^j_{(\uline{x})}(R)$ has a Frobenius action.\end{sex}

\subsection{Hartshorne-Speiser-Lyubeznik numbers}

Given part c) of \Cref{basics of frob actions}, to understand the Frobenius orbit closure of an $f$-submodule $M'\subset M$, it suffices to study the orbit closure of $0 \subset M/M'$, and only study elements which are ``nilpotent" under Frobenius. We have: \[ 0^f_M = \bigcup_{e \in \mbn} \ker(f^e: M \rightarrow M).\] It is natural to seek a single $e \in \mbn$ such that $0^f_M = \ker(f^e)$. 

\begin{sdef}\label{hsl numbers defn}
Let $M$ be an $R$-module with a Frobenius action. Define the \textbf{Hartshorne-Speiser-Lyubeznik number of $M$} to be: \[\hsl(M)= \inf\left\lbrace e \in \mbn \mid f^e\left(0^f_M\right) = 0\right\rbrace \in \mbn \cup \{\infty\}.\]
\end{sdef}

If $M$ is finitely generated, for any generating set $m_1,\cdots,m_r$ of $0^f_M$ we have for each $i$ an $e_i \in \mbn$ such that $f^{e_i}(m_i)=0$. Then $\hsl(M)\le \max{e_i}<\infty$. Another important case is also known.

\begin{sthm}[\cite{HS}, \cite{L}, \cite{S}]
Let $A$ be an Artinian $R$-module with a Frobenius action. Then $\hsl(A)<\infty$.
\end{sthm} 

\begin{srmk}
It is common to redefine $\hsl(R) = \max \{ 0 \le j \le d \mid \hsl(H^j_\mfm(R)\}$, which is finite by the previous theorem. Note this does not agree the notation in \Cref{hsl numbers defn} applied to $R$ -- however, we will not have need for the latter meaning here.

\end{srmk}
\subsection{The relative Frobenius action on local cohomology}

This section summarizes material from \cite{PQ} and \cite{Q}. 

\begin{sdef}
Let $I,J\subset R$ be ideals. The Frobenius endomorphism $F: R/J \rightarrow R/J$ can be factored as follows: \begin{center}
\begin{tikzcd}
R/J \arrow{rr}{F}\arrow{dr}{f_R} & \, & R/J \\
\, & R/J^{\fbp{p}} \arrow{ur}{\pi}& \,
\end{tikzcd}
\end{center} where $f_R(x+J) = x^p + J^{\fbp{p}}$. Call the map $f_R$ the \textbf{relative Frobenius map} on $R/J$. Note $f_R$ is $p$-linear. For any ideal $I\subset R$, the $p^e$-linear map $f_R^e: R/J \rightarrow R/J^{\fbp{p^e}}$ induces the \textbf{relative Frobenius action on local cohomology}, $f_R^e: H^j_I(R/J)\rightarrow H^j_I (R/J^{\fbp{p^e}} )$. 
\end{sdef}

\begin{srmk}
It is useful to note that when $J=0$, the diagram simply gives the standard Frobenius action on local cohomology. 
\end{srmk}

\begin{sdef}
Let $I,J\subset R$ be ideals. The \textbf{relative Frobenius closure of zero in $H^j_I(R/J)$} is the submodule: \[0^{f_R}_{H^j_I(R/J)} = \left\lbrace \xi \in H^j_I(R/J) \mid f^e_R(\xi) = 0 \in H^j_I\left(R/J^{\fbp{p^e}}\right) \text{ for some } e \in \mbn\right\rbrace.\] The \textbf{relative Hartshorne-Speiser-Lyubeznik number of $H^j_I(R/J)$} is: \[ \hslr(H^j_I(R/J))= \inf \left\lbrace e \in \mbn \mid f^e_R\left( 0^{f_R}_{H^j_I(R/J)}\right) = 0 \subset H^j_I\left(R/J^{\fbp{p^e}}\right)\right\rbrace \in \mbn \cup \{\infty\}.\]
\end{sdef}

There does not seem to be another definition of ``relative Frobenius closure" so we omit the descriptor ``orbit" used earlier. 

\begin{sprop}\label{filter regular les plus frob}
Given any ideal $I\subset R$ and any filter regular sequence $\uline{x}=x_1,\cdots,x_t$ with $J=(\uline{x})$, for any $e \in \mbn$ we have a commutative diagram with exact rows: \begin{center} \begin{tikzcd}
\cdots \arrow{r}{\cdot x_i} & H^j_I(R/J_{i-1}) \arrow{r}{\alpha_0} \arrow{d}{f^e_R} & H^j_I(R/J_i) \arrow{r}{\beta_0} \arrow{d}{f^e_R} & H^{j+1}_I(R/J_{i-1}) \arrow{r} \arrow{d}{f^e_R} & \cdots \\
\cdots \arrow{r}{\cdot x_i^{p^e}} & H^j_I\left(R/J_{i-1}^{\fbp{p^e}}\right) \arrow{r}{\alpha_e} & H^j_I\left(R/J_i^{\fbp{p^e}}\right) \arrow{r}{\beta_e} & H^{j+1}_I\left(R/J_{i-1}^{\fbp{p^e}}\right) \arrow{r} & \cdots 
\end{tikzcd}.
\end{center} for $j > 0$ and $1\le i\le t$, where the maps are those given in \Cref{filter regular les}.
\end{sprop}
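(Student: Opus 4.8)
The plan is to show that the entire long exact sequence of \ref{filter regular les} is natural with respect to the relative Frobenius $F_R^e$, by checking that the two short exact sequences used to build it carry compatible $F_R^e$-maps and then invoking naturality of the connecting homomorphism. Fix $j>0$ and $1\le i\le t$. Recall that for any ideal $K\subset R$ the relative Frobenius is $F_R^e(x+K)=x^{p^e}+K^{\fbp{p^e}}$, which is a well-defined $p^e$-linear map $R/K\to R/K^{\fbp{p^e}}$.

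First I would record the two module-level compatibilities. For the colon sequence
\[ 0 \to (J_{i-1}:_R x_i)/J_{i-1} \to R/J_{i-1} \to R/(J_{i-1}:_R x_i) \to 0, \]
if $rx_i\in J_{i-1}$ then $(rx_i)^{p^e}=r^{p^e}x_i^{p^e}\in J_{i-1}^{\fbp{p^e}}$, so $F_R^e$ carries $(J_{i-1}:_R x_i)/J_{i-1}$ into $(J_{i-1}^{\fbp{p^e}}:_R x_i^{p^e})/J_{i-1}^{\fbp{p^e}}$ and $R/(J_{i-1}:_R x_i)$ into $R/(J_{i-1}^{\fbp{p^e}}:_R x_i^{p^e})$, with all squares commuting on the nose. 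Likewise, the factorization sequence
\[ 0 \to R/(J_{i-1}:_R x_i) \xrightarrow{\cdot x_i} R/J_{i-1} \xrightarrow{\pi} R/J_i \to 0 \]
maps via $F_R^e$ to the analogous sequence for $J_{i-1}^{\fbp{p^e}}$ and $x_i^{p^e}$, the only point being that $F_R^e$ turns multiplication by $x_i$ into multiplication by $x_i^{p^e}$ (since $\overline{(rx_i)^{p^e}}=x_i^{p^e}\,\overline{r^{p^e}}$) — which is precisely why the bottom row in the statement carries the twisted self-maps $\cdot x_i^{p^e}$.

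Next I would pass to the \v Cech complex on (the images of) generators of $I$. This functor is exact, and by the construction in \ref{frob action local cohomology} together with the definition of the relative Frobenius action on local cohomology, the $p^e$-linear maps above localize to $p^e$-linear chain maps of \v Cech complexes commuting with the differentials; \v Cech also sends each of the two short exact sequences of modules to a short exact sequence of complexes, and the $F_R^e$-comparison to a morphism between them. The usual zig-zag/snake argument then yields the long exact sequences in $H^\bullet_I$ with a commuting ladder — and since the snake lemma needs only additivity of the vertical maps, the failure of $F_R^e$ to be $R$-linear causes no trouble. Applied to the factorization sequence this already gives the ladder claimed, except that top and bottom are a priori phrased in terms of $H^\bullet_I(R/(J_{i-1}:_R x_i))$ and $H^\bullet_I(R/(J_{i-1}^{\fbp{p^e}}:_R x_i^{p^e}))$.

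Finally I would identify those terms. By the Remark in Section~1.1 the sequence $x_1^{p^e},\dots,x_{i-1}^{p^e},x_i^{p^e}$ is again filter regular, so the proposition preceding \ref{filter regular les} applies and $(J_{i-1}^{\fbp{p^e}}:_R x_i^{p^e})/J_{i-1}^{\fbp{p^e}}$ has finite length; hence its positive-degree local cohomology vanishes and the colon sequence above, with its $F_R^e$-comparison, gives an $F_R^e$-equivariant isomorphism $H^j_I(R/J_{i-1}^{\fbp{p^e}})\cong H^j_I(R/(J_{i-1}^{\fbp{p^e}}:_R x_i^{p^e}))$ for $j>0$, exactly as \ref{filter regular les} does for $J_{i-1}$. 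Substituting these isomorphisms into the ladder yields the asserted diagram; running the same argument with $\uline{x}$ replaced by $\uline{x}^{p^{e'}}$ (still filter regular) gives the general $e,e'$ version. I do not expect a real obstacle here — the content is just naturality of the local cohomology long exact sequence — the only thing that needs care is bookkeeping the Frobenius twists ($x_i$ to $x_i^{p^e}$, $J$ to $J^{\fbp{p^e}}$) consistently across both short exact sequences.
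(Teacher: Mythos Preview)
Your proposal is correct and follows essentially the same route as the paper: set up the $F_R^e$-compatible morphism between the factorization short exact sequences (the paper writes this diagram explicitly, with your map $R/(J_{i-1}:_R x_i)\to R/(J_{i-1}^{\fbp{p^e}}:_R x_i^{p^e})$ denoted $(F_R^e)'$), apply $H^\bullet_I$, and then identify $H^j_I$ of the colon quotients with $H^j_I(R/J_{i-1})$ and $H^j_I(R/J_{i-1}^{\fbp{p^e}})$ via \ref{filter regular les}. The paper leaves the last identification and the naturality check as an exercise, whereas you spell out the \v Cech/snake argument and the filter-regularity of $\uline{x}^{p^e}$ needed for the bottom row; that extra detail is welcome but not a different method.
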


\begin{proof}
Fix $e \in \mbn$ and consider the following commutative diagram: \begin{center}
\begin{tikzcd}
0 \arrow{r} & R/(J_{i-1}:_Rx_i) \arrow{r}{\cdot x_i} \arrow{d}{(f^e_R)'}& R/J_{i-1} \arrow{r}{\pi}\arrow{d}{f^e_R} & R/J_i \arrow{d}{f^e_R} \arrow{r} & 0 \\
0 \arrow{r} & R/\left(J_{i-1}^{\fbp{p^e}}:_Rx_i^{p^e}\right) \arrow{r}{\cdot x_i^{p^e}}  & R/J_{i-1}^{\fbp{p^e}} \arrow{r}{\pi_e} & R/J_i^{\fbp{p^e}} \arrow{r}  & 0 
\end{tikzcd},
\end{center} where we define $(f^e_R)': R/(J_{i-1}:x_i)\rightarrow R/( J_{i-1}^{\fbp{p^e}} :_R x_i^{p^e})$ by $x + (J_{i-1} :_R x_i)\mapsto x^{p^e} + (J_{i-1}^{\fbp{p^e}}:_R x_i^{p^e})$. Now apply the functor $H^j_I(\dot)$ and check that the map $(f_R)'$ composed with the isomorphism given in the proof of \Cref{filter regular les} gives $f_R$. 
\end{proof}

\begin{sprop}
Let $\uline{x}=x_1,\cdots,x_d$ be a filter regular system of parameters and $\mfq=(\uline{x})$. Then $\hslr(H^0_\mfm(R/\mfq)) = \fte(\mfq)$ (recall \Cref{ftedefn}).
\end{sprop}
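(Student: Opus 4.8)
The plan is to strip both sides of the asserted equality down to statements about the Frobenius bracket powers of $\mfq$ and of $\mfq^F$, and then match them. The decisive simplification is that $\uline{x}=x_1,\dots,x_d$ being a \emph{full} system of parameters forces $\mfq$ to be $\mfm$-primary: $\mfm^n\subseteq\mfq$ for some $n$, so $R/\mfq$ is Artinian and $H^0_\mfm(R/\mfq)=R/\mfq$, and likewise $H^0_\mfm(R/\mfq^{\fbp{p^e}})=R/\mfq^{\fbp{p^e}}$ for every $e$ since $\mfq^{\fbp{p^e}}$ is again $\mfm$-primary. Thus the relative Frobenius maps in sight are just the $p^e$-linear maps $F_R^e\colon R/\mfq\to R/\mfq^{\fbp{p^e}}$, $x+\mfq\mapsto x^{p^e}+\mfq^{\fbp{p^e}}$. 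First I would identify the relative Frobenius closure of zero: straight from the definition, $0^{F_R}_{H^0_\mfm(R/\mfq)}$ is the set of classes $x+\mfq$ with $x^{p^e}\in\mfq^{\fbp{p^e}}$ for some $e$; applying the bracket power $\fbp{p}$ to such a membership yields $x^{p^{e+1}}\in\mfq^{\fbp{p^{e+1}}}$, so ``for some $e$'' is the same as ``for all $e\gg 0$'', and hence $0^{F_R}_{H^0_\mfm(R/\mfq)}=\mfq^F/\mfq$.

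Next I would translate the vanishing condition defining $\hslr$. For a fixed $e$, the map $F^e_R$ sends every element of $0^{F_R}_{H^0_\mfm(R/\mfq)}=\mfq^F/\mfq$ to zero in $R/\mfq^{\fbp{p^e}}$ exactly when $y^{p^e}\in\mfq^{\fbp{p^e}}$ for all $y\in\mfq^F$, i.e. $(\mfq^F)^{\fbp{p^e}}\subseteq\mfq^{\fbp{p^e}}$; since $\mfq\subseteq\mfq^F$ gives the reverse inclusion for free, this is precisely $(\mfq^F)^{\fbp{p^e}}=\mfq^{\fbp{p^e}}$. Therefore $\hslr(H^0_\mfm(R/\mfq))=\inf\{\,e\in\mbn : (\mfq^F)^{\fbp{p^e}}=\mfq^{\fbp{p^e}}\,\}$.

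Finally I would reconcile this infimum with $\fte(\mfq)$. Applying the bracket power $\fbp{p^{e'}}$ to the equality $(\mfq^F)^{\fbp{p^e}}=\mfq^{\fbp{p^e}}$ produces $(\mfq^F)^{\fbp{p^{e+e'}}}=\mfq^{\fbp{p^{e+e'}}}$, so the set of exponents $e$ for which the equality holds is upward closed; its infimum is then the least $e_0$ such that the equality holds for all $e\ge e_0$, which is exactly $\fte(\mfq)$ by definition. This gives $\hslr(H^0_\mfm(R/\mfq))=\fte(\mfq)$. I do not anticipate a real obstacle here: the only point requiring care is this last bookkeeping step matching the two slightly different ``infimum'' formulations, and it is handled by the upward-closedness observation. (Note that filter regularity of $\uline{x}$ is not actually used for this statement; it is recorded because $\mfq$ will be passed, in this presented form, to the later results.)
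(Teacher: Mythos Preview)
Your proof is correct and follows essentially the same route as the paper: identify $H^0_\mfm(R/\mfq)=R/\mfq$ since $\mfq$ is $\mfm$-primary, compute $0^{F_R}_{R/\mfq}=\mfq^F/\mfq$, and observe that $F^e_R$ kills this submodule exactly when $(\mfq^F)^{\fbp{p^e}}=\mfq^{\fbp{p^e}}$. The only difference is that you make explicit the upward-closedness argument needed to match the ``smallest $e$ for which vanishing occurs'' formulation of $\hslr$ with the ``smallest $e_0$ such that equality holds for all $e\ge e_0$'' formulation of $\fte(\mfq)$; the paper leaves this implicit, and your remark that filter regularity is unused is also correct.
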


\begin{proof}
Since $\mfq$ is $\mfm$-primary, $R/\mfq$ is $\mfm$-torsion, and hence $H^0_\mfm(R/\mfq)=R/\mfq$. Note the relative Frobenius action $f^e_R: R/\mfq \rightarrow R/\mfq^{\fbp{p^e}}$ recovers the Frobenius closure of 0, namely $0^{f_R}_{R/\mfq} = \mfq^F/\mfq$. Then, observe $f^e_R(\mfq^F/\mfq)=(\mfq^F )^{\fbp{p^e}}/\mfq^{\fbp{p^e}}$ which completes the proof.
\end{proof}

This connection and the diagram in \Cref{filter regular les plus frob} gives us the ability to use $\hsl(H^j_\mfm(R))$ to control $\fte(\mfq)$, as long as we have some control on what maps into the kernel of $f^e_R$. Although $f_R$ is not a true Frobenius action, the proof of \Cref{basics of frob actions} part c) still works, showing:  \[ \alpha_e\left( 0^{f_R}_{H^j_\mfm\left(R/\mfq_{i-1}^{\fbp{p^e}} \right)} \right) \subset 0^{f_R}_{H^j_\mfm\left(R/\mfq_i^{\fbp{p^e}}\right)}. \] 

\section{Finite Frobenius test exponents}
\subsection{The sufficient condition}

Quy essentially uses the condition given in \Cref{the condition} in the major theorems of \cite{Q}. Isolating this condition in particular allows us to expand the classes of rings known to have finite Frobenius test exponent.

\begin{sthm}\label{the condition}
Recall the notation in \Cref{filter regular les plus frob}, and specialize to the case that $I=\mfm$. Suppose there is an $e_0 \in \mbn$ depending only on the ring such that for any $e \ge e_0$ and any filter regular system of parameters $\uline{x}=x_1,\cdots,x_d$ with $\mfq=(\uline{x})$, we have: \[\alpha_{e}^{-1}\left(0^{f_R}_{H^j_\mfm\left(R/\mfq_i^{\fbp{p^{e}}}\right)}\right)= 0^{f_R}_{H^j_\mfm\left(R/\mfq_{i-1}^{\fbp{p^{e}}}\right)}\] for all $0 \le i+j < d$. Then, \[ \on{Fte}(R)\le e_0 +\sum^d_{k=0} \binom{d}{k} \on{HSL}(H^k_\mfm(R)).\]
\end{sthm}

\begin{proof}
Replace $\mfq$ by $\mfq^{\fbp{p^{e_0}}}$ and note $\fte(\mfq) \le \fte\left(\mfq^{\fbp{p^{e_0}}}\right)+e_0$, so it suffices to assume each $\alpha_e$ has the displayed property above. For notational convenience, we set $S_{i,e} = R/\mfq_i^{\fbp{p^e}}$ and $S_i = S_{i,0}$.

We now claim that: \[\on{HSL}_R(H^j_\mfm(S_i)) \le \sum_{k=j}^{i+j} \binom{i}{k-j}\on{HSL}(H^k_\mfm(R)) \] for all $i+j\le d$. We will show this by induction on $i$. If $i=0$, then $\hslr(H^j_\mfm(R)) = \hsl(H^j_\mfm(R))$, so there is nothing to show.

For our induction hypothesis, suppose \[\hslr(H^j_\mfm(S_{i-1})\le \sum_{k=j}^{i-1+j} \binom{i-1}{k-j} \hsl(H^k_\mfm(R))\] for all $0 \le j <d-(i-1)$. Let $e = \hslr(H^{j+1}_\mfm(S_{i-1}) )$ and $e'= \hslr(H^j_\mfm( S_{i-1,e} ))$. By the inductive hypothesis and manipulation of the binomial coefficients, $e+e' \le \sum_{k=j}^{i+j}\binom{i}{k-j} \hsl(H^k_\mfm(R))$ so it suffices to show that $\hslr(H^j_\mfm(S_i)) \le e+e'$.

Again for convenience, set $0^{f_R}_{H^j_\mfm(S_{i,e})} = 0^{f_R}_{j,i,e}$. Now, take $\xi \in 0^{f_R}_{j,i,0}$, so that $\beta_0(\xi)\in 0^{f_R}_{j+1,i-1,0}$. By our choice of $e$, we have: $0=f^e_R(\beta_0(\xi))=\beta_e(f^e_R(\xi))$ so that \[f^e_R(\xi) \in \ker(\beta_e) = \im(\alpha_e)\subset H^j_\mfm(S_{i,e}).\] Thus, there is an $\xi' \in H^j_\mfm(S_{i,e})$ such that $\alpha_e(\xi')=f^e_R(\xi) \in 0^{f_R}_{j,i,e}$. 
 
But $\alpha_e^{-1}(0^{f_R}_{j,i,e}) = 0^{f_R}_{j,i-1,e}$ by the condition on $\alpha_e$, thus by choice of $e'$ we have $f^{e'}_R(\xi')=0$ so that: \[0=\alpha_{e+e'}(f^{e'}_R(\xi'))=f^{e'}_R(\alpha_e(\xi')) = f^{e'}_R(f^e_R(\xi)) = f^{e+e'}_R(\xi).\] Since $\xi\in 0^{f_R}_{j,i,0}$ was arbitrary, the result is shown. 
\end{proof}

\subsection{An application}

As mentioned earlier in the paper, the following two classes of rings were known to have finite Frobenius test exponent.

\begin{sdef}\label{gcm defn}
Let $(R, \mfm)$ be a local ring of dimension $d$. $R$ is \textbf{generalized Cohen-Macaulay} if for all $0 \le j < d$, $H^j_\mfm(R)$ is finite length.
\end{sdef}

\begin{sdef}\label{wfn defn}
Let $(R,\mfm)$ be a local ring of dimension $d$ and of prime characteristic $p>0$. $R$ is \textbf{weakly F-nilpotent} if the standard Frobenius actions on $H^j_\mfm(R)$ are nilpotent, i.e. if $H^j_\mfm(R)=0^F_{H^j_\mfm(R)}$ for each $0 \le j < d$.\end{sdef}

We can mix these definitions together to establish a new class of F-singularity to which we can apply \Cref{the condition}.

\begin{sdef}\label{defn gwfnilp}
Say $R$ is \textbf{generalized weakly F-nilpotent} if $H^j_\mfm(R)/0^F_{H^j_\mfm(R)}$ is finite length for all $0 \le j < d$.
\end{sdef}

\begin{slem}\label{uniform fnilpotent multipliers}
Suppose $R$ is generalized weakly F-nilpotent. Then there is an $e_0 \in \mbn$ depending only on $R$ such that for all $e\ge e_0$ and any filter regular system of parameters $\uline{x} = x_1,\cdots,x_d$ with $\mfq=(\uline{x})$, we have: \[\mfq^{\fbp{p^e}}\cdot H^j_\mfm\left(R/\mfq_i^{\fbp{p^e}} \right)\subset 0^{f_R}_{H^j_\mfm\left(R/\mfq_i^{\fbp{p^e}} \right)},\] for all $0 \le i \le d-1$ and $0 \le j < d-i$.
\end{slem}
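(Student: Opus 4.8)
The strategy is an induction on $i$, running parallel to the proof of \ref{the condition} and using the long exact sequences of \ref{filter regular les plus frob}, but now tracking annihilator ideals rather than HSL numbers. Since $R$ is generalized weakly F-nilpotent, set $\mfb_j = \ann_R\!\big(H^j_\mfm(R)/0^F_{H^j_\mfm(R)}\big)$, which is $\mfm$-primary for $0\le j<d$, and let $\mfb = \mfb_0\mfb_1\cdots\mfb_{d-1}$, again $\mfm$-primary. Because $\mfb$ is $\mfm$-primary, $\mfm^N\subset \mfb$ for some $N$, so picking $e_0$ with $p^{e_0}\ge N$ gives $\mfq^{\fbp{p^e}} \subset \mfm^{\fbp{p^e}} \subset \mfm^{p^e} \subset \mfm^N \subset \mfb$ for all $e\ge e_0$ and every parameter ideal $\mfq$. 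Thus it suffices to prove the sharper $e$-free statement $\mfb^{2^i}\cdot H^j_\mfm\!\big(R/\mfq_i^{\fbp{p^e}}\big)\subset 0^{F_R}_{H^j_\mfm(R/\mfq_i^{\fbp{p^e}})}$ for all $i+j<d$ and all $e\in\mbn$; absorbing the $2^{d}$ power into the choice of $N$ then finishes the lemma.

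**Key steps.** For the base case $i=0$ we have $\mfq_0 = 0$, so $F_R = F$ on $H^j_\mfm(R)$ and $0^{F_R}_{H^j_\mfm(R)} = 0^F_{H^j_\mfm(R)}$; by definition $\mfb_j$, hence $\mfb$, kills the quotient, so $\mfb\cdot H^j_\mfm(R)\subset 0^F_{H^j_\mfm(R)}$. For the inductive step, fix $i\ge 1$ and $j$ with $i+j<d$, and use the commutative diagram of \ref{filter regular les plus frob} applied with the filter regular sequence $\uline{x}$ and the ideal $I=\mfm$ (here all the relevant rows involve $\mfq_{i-1}^{\fbp{p^e}}$ and $\mfq_i^{\fbp{p^e}}$ for varying $e$, since $(\mfq_i)^{\fbp{p^e}} = (\mfq^{\fbp{p^e}})_i$). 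Take $\xi\in H^j_\mfm(R/\mfq_i^{\fbp{p^e}})$ and a scalar $b \in \mfb^{2^i} = \mfb^{2^{i-1}}\cdot\mfb^{2^{i-1}}$; write $b = b_1 b_2$. Apply $\beta_e$: then $\beta_e(\xi)\in H^{j+1}_\mfm(R/\mfq_{i-1}^{\fbp{p^e}})$, and $(i-1)+(j+1) = i+j < d$, so by the inductive hypothesis $b_1\beta_e(\xi) = \beta_e(b_1\xi)$ lies in $0^{F_R}_{H^{j+1}_\mfm(R/\mfq_{i-1}^{\fbp{p^e}})}$. Hence $F^{e'}_R(\beta_e(b_1\xi)) = 0$ for some $e'$, i.e. $\beta_{e+e'}(F^{e'}_R(b_1\xi)) = 0$, so by exactness of the bottom row $F^{e'}_R(b_1\xi) = \alpha_{e+e'}(\eta)$ for some $\eta\in H^j_\mfm(R/\mfq_{i-1}^{\fbp{p^{e+e'}}})$. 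Now $(i-1)+j<d$, so the inductive hypothesis at level $i-1$ gives $b_2\eta\in 0^{F_R}_{H^j_\mfm(R/\mfq_{i-1}^{\fbp{p^{e+e'}}})}$; applying $\alpha$ and using that $\alpha$ carries the relative Frobenius closure of zero into the relative Frobenius closure of zero (the displayed containment following \ref{filter regular les plus frob}), we get $\alpha_{e+e'}(b_2\eta) = b_2 F^{e'}_R(b_1\xi)\cdot(\text{up to the scalar adjustment}) \in 0^{F_R}$, hence some further Frobenius power kills $b_1 b_2\,F^{e'}_R(\xi) = F^{e'}_R(b\xi)$ up to relabeling, which by the closing remark of the relative Frobenius closure definition forces $b\xi\in 0^{F_R}_{H^j_\mfm(R/\mfq_i^{\fbp{p^e}})}$. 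Since $\xi$ and $b\in\mfb^{2^i}$ were arbitrary, this gives $\mfb^{2^i}\cdot H^j_\mfm(R/\mfq_i^{\fbp{p^e}})\subset 0^{F_R}_{H^j_\mfm(R/\mfq_i^{\fbp{p^e}})}$, completing the induction.

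**Main obstacle.** The delicate point is the bookkeeping of scalars versus Frobenius powers: multiplication by $b$ on $H^j_\mfm(R/\mfq_i^{\fbp{p^e}})$ does not literally commute with $F_R$ (it is $p$-linear), so I must be careful that after applying $F^{e'}_R$ the relevant scalar becomes $b^{p^{e'}}$ rather than $b$, and then re-absorb that into the chosen power of $\mfb$ — this is why the exponent $2^i$ (and ultimately the freedom to enlarge $N$) is needed, exactly as in the analogous step of \ref{uniform std exponent}. The other point requiring care is checking that $(\mfq_i)^{\fbp{p^e}}$ equals $(\mfq^{\fbp{p^e}})_i$ so that the diagram of \ref{filter regular les plus frob} applies verbatim with $\uline{x}^{p^e}$ in place of $\uline{x}$; this is immediate since both equal $(x_1^{p^e},\dots,x_i^{p^e})$ and $\uline{x}^{p^e}$ is still filter regular by the Remark after the definition of filter regular sequences. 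Once these are pinned down, the argument is a routine diagram chase.
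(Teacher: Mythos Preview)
Your proposal is correct and follows essentially the same approach as the paper: define $\mfb = \mfb_0\cdots\mfb_{d-1}$, prove by induction on $i$ that $\mfb^{2^i}$ annihilates $H^j_\mfm(R/\mfq_i^{\fbp{p^e}})$ modulo $0^{F_R}$ via the diagram of \ref{filter regular les plus frob}, and then choose $e_0$ so that $\mfq^{\fbp{p^{e_0}}}\subset \mfb^{2^{d-1}}$. The only point to tighten is the ``scalar adjustment'' you flag: rather than multiplying $\eta$ by $b_2$, multiply by $b_2^{p^{e'}}$ (which still lies in $\mfb^{2^{i-1}}$), so that $\alpha_{e+e'}(b_2^{p^{e'}}\eta)=b_2^{p^{e'}}F^{e'}_R(b_1\xi)=F^{e'}_R(b_1b_2\xi)$ exactly---this is precisely how the paper handles it, and it removes the need for any ``relabeling.''
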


\begin{proof} 
By hypothesis, the ideals: \[\mfb_j=\ann_R\left(H^j_\mfm(R)/0^F_{H^j_\mfm(R)}\right)\] for $0 \le j < d$ are $\mfm$-primary or all of $R$, and hence $\mfb=\mfb_0\cdots\mfb_{d-1}$ is either $\mfm$-primary or all of $R$. 

Let $\uline{x}=x_1,\cdots,x_d$ be a filter regular system of parameters with $\mfq=(\uline{x})$ and $\mfq_i=(x_1,\cdots,x_i)$. Recall the notation in the proof of \Cref{the condition}: $S_{i,e} = R/\mfq_i^{\fbp{p^e}}$, with $S_i = S_{i,0}$ and $0^{f_R}_{H^j_\mfm(S_{i,e})} = 0^{f_R}_{j,i,e}$. 

We claim that $\mfb^{2^i}\subset \ann_R\left(H^j_\mfm(S_i)/0^{f_R}_{j,i,0} \right)$ for $0 \le i \le d$ and $0 \le j < d-i$. As before, we induce on $i$. If $i=0$, then there is nothing to show. When $i>0$, we can consider the commutative diagram with exact rows from \Cref{filter regular les plus frob}: \begin{center} 
\begin{tikzcd}
\cdots \arrow{r}{\cdot x_i} & H^j_\mfm(S_{i-1}) \arrow{r}{\alpha_0} \arrow{d}{f^e_R} & H^j_\mfm(S_i) \arrow{r}{\beta_0} \arrow{d}{f^e_R} & H^{j+1}_\mfm(S_{i-1}) \arrow{r} \arrow{d}{f^e_R} & \cdots \\
\cdots \arrow{r}{\cdot x_i^{p^e}} & H^j_\mfm(S_{i-1,e}) \arrow{r}{\alpha_e}  & H^j_\mfm(S_{i,e}) \arrow{r}{\beta_e} & H^{j+1}_\mfm(S_{i-1,e}) \arrow{r} & \cdots 
\end{tikzcd}.
\end{center} 

Let $\xi \in H^j_\mfm(S_i)$ and suppose $x,y \in \mfb^{2^{i-1}}$. But then $\beta_0(\xi) \in H^{j+1}_\mfm(S_{i-1})$ and so by hypothesis $x\beta_0(\xi)=\beta_0(x\xi) \in 0^{f_R}_{j+1,i-1,0}$, so there is an $e \in \mbn$ such that: \[ f^e_R(\beta_0(x\xi))=\beta_e(f^e_R(x\xi)) = 0. \] By exactness, there is a $\xi' \in H^j_\mfm(S_{i-1},e)$ such that $\alpha_e(\xi') = f^e_R(x\xi)$. 

But $y^{p^e} \in \mfb^{2^{i-1}}$ so by hypothesis $y^{p^e} \xi' \in 0^{f_R}_{j,i-1,e}$, and thus: \[ \alpha_e(y^{p^e}\xi') = y^{p^e}f^e_R(x\xi)=f^e_R(xy\xi) \in 0^{f_R}_{j,i,e}\] which implies $xy \xi \in 0^{f_R}_{j,i,0}$. Hence $xy \in \ann_R\left( H^j_\mfm(S_i)/0^{f_R}_{j,i,0}\right)$, proving the claim. Finally, pick $N$ minimal so $\mfm^N \subset \mfb$. Then for the smallest $e_0 \in \mbn$ with $p^{e_0} \ge N 2^{d-1}$, \[\mfq^{\fbp{p^e}} H^j_\mfm(S_{i,e}) \subset 0^{f_R}_{j,i,e} \] for any $e \ge e_0$, proving the lemma.
\end{proof}

\begin{sthm}\label{gwfncase}
Suppose $R$ is a generalized weakly F-nilpotent ring. Then $\fte(R)<\infty$. 
\end{sthm}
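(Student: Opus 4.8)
The plan is to deduce the theorem from Theorem \ref{the condition}, feeding in the multiplier statement of Lemma \ref{uniform fnilpotent multipliers} as the essential input. Let $e_0 \in \mbn$ be the exponent furnished by \ref{uniform fnilpotent multipliers}, so that for every $e \ge e_0$, every parameter ideal $\mfq=(\uline{x})$ generated by a filter regular system of parameters, every $0 \le i \le d-1$ and every $0 \le j < d-i$ we have $\mfq^{\fbp{p^e}}\cdot H^j_\mfm(S_{i,e}) \subset 0^{F_R}_{H^j_\mfm(S_{i,e})}$, with $S_{i,e}=R/\mfq_i^{\fbp{p^e}}$ as in \ref{uniform fnilpotent multipliers}. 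I will check that this same $e_0$ witnesses the hypothesis of \ref{the condition}; granting that, \ref{the condition} yields $\fte(R) \le e_0 + \sum_{k=0}^d \binom{d}{k}\hsl(H^k_\mfm(R))$, which is finite since each $\hsl(H^k_\mfm(R))$ is finite by the Hartshorne-Speiser-Lyubeznik theorem.

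So fix $e \ge e_0$, a filter-regular parameter ideal $\mfq$, and indices with $i+j<d$; as the case $i=0$ is vacuous we may assume $i \ge 1$. One inclusion, $0^{F_R}_{H^j_\mfm(S_{i-1,e})} \subset \alpha_e^{-1}\bigl(0^{F_R}_{H^j_\mfm(S_{i,e})}\bigr)$, always holds, since $\alpha_e$ carries the relative Frobenius closure of zero into the relative Frobenius closure of zero (immediate from the commutativity in \ref{filter regular les plus frob}). For the reverse inclusion, take $\xi \in H^j_\mfm(S_{i-1,e})$ with $\alpha_e(\xi) \in 0^{F_R}_{H^j_\mfm(S_{i,e})}$, and pick $e_1 \in \mbn$ with $F^{e_1}_R(\alpha_e(\xi)) = 0$ in $H^j_\mfm(S_{i,e+e_1})$. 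Apply \ref{filter regular les plus frob} to the filter regular sequence $\uline{x}^{p^e}=x_1^{p^e},\cdots,x_d^{p^e}$ (whose associated ideals are the $\mfq_i^{\fbp{p^e}}$) with relative exponent $e_1$: the commutativity of the resulting ladder gives $\alpha_{e+e_1}\bigl(F^{e_1}_R(\xi)\bigr) = 0$, and exactness of its bottom row identifies $\ker \alpha_{e+e_1}$ with the image of multiplication by $x_i^{p^{e+e_1}}$. Hence $F^{e_1}_R(\xi) = x_i^{p^{e+e_1}}\eta$ for some $\eta \in H^j_\mfm(S_{i-1,e+e_1})$.

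Now invoke \ref{uniform fnilpotent multipliers} at level $e+e_1 \ge e_0$ with index $i-1$ (legitimate since $j < d-i < d-(i-1)$): because $x_i^{p^{e+e_1}} \in \mfq^{\fbp{p^{e+e_1}}}$, we obtain $F^{e_1}_R(\xi) = x_i^{p^{e+e_1}}\eta \in 0^{F_R}_{H^j_\mfm(S_{i-1,e+e_1})}$. By the observation recorded in the definition of the relative Frobenius closure (if $F^{e_1}_R(\xi)$ lies in the relative Frobenius closure of zero in $H^j_\mfm(S_{i-1,e+e_1})$, then $\xi$ lies in that of $H^j_\mfm(S_{i-1,e})$), we conclude $\xi \in 0^{F_R}_{H^j_\mfm(S_{i-1,e})}$. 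Thus $\alpha_e^{-1}\bigl(0^{F_R}_{H^j_\mfm(S_{i,e})}\bigr) = 0^{F_R}_{H^j_\mfm(S_{i-1,e})}$ for all $e \ge e_0$, so \ref{the condition} applies and the proof is complete.

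Since the substantive content is already contained in \ref{uniform fnilpotent multipliers}, I do not expect a genuine obstacle here; the only thing requiring care is the bookkeeping of the three levels $e$, $e_1$, and $e+e_1$ appearing in the ladder of \ref{filter regular les plus frob}, together with making sure the multiplier lemma is invoked at a level that is at least $e_0$ — which is automatic once $e \ge e_0$.
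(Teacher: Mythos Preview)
Your argument follows the same strategy as the paper's proof and is correct for $j>0$, but it has a gap at $j=0$. The long exact sequence of \ref{filter regular les} (and hence the ladder of \ref{filter regular les plus frob}) is only established in the form
\[
\cdots \xrightarrow{\cdot x_i} H^j_\mfm(S_{i-1,e}) \xrightarrow{\alpha_e} H^j_\mfm(S_{i,e}) \xrightarrow{\beta_e} H^{j+1}_\mfm(S_{i-1,e}) \to \cdots
\]
for $j>0$; the identification $H^j_\mfm\bigl(R/(\mfq_{i-1}^{[p^e]}:x_i^{p^e})\bigr)\simeq H^j_\mfm(S_{i-1,e})$ used to replace the domain of the multiplication map fails for $j=0$ because $(\mfq_{i-1}:x_i)/\mfq_{i-1}$ need not have vanishing $H^0_\mfm$. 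Thus when $j=0$ the exactness only gives $F^{e_1}_R(\xi)=x_i^{p^{e+e_1}}\eta$ with $\eta\in H^0_\mfm\bigl(R/(\mfq_{i-1}^{[p^{e+e_1}]}:x_i^{p^{e+e_1}})\bigr)$, not $\eta\in H^0_\mfm(S_{i-1,e+e_1})$. Equivalently, $\ker\alpha_{e+e_1}=x_i^{p^{e+e_1}}S_{i-1,e+e_1}\cap H^0_\mfm(S_{i-1,e+e_1})$, which in general strictly contains $x_i^{p^{e+e_1}}H^0_\mfm(S_{i-1,e+e_1})$, so Lemma~\ref{uniform fnilpotent multipliers} does not apply to $\eta$ as you have written it.

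The paper's proof separates out exactly this case: it notes that $H^0_\mfm(R/J)$ is an ideal of $R/J$ and argues directly with representatives to show the image of the multiplication map at level $j=0$ lands in the relative Frobenius closure of zero. You should add such an argument (or otherwise justify that $\ker\alpha_{e+e_1}\subset 0^{F_R}$ at $j=0$) before invoking \ref{the condition}; the rest of your proof is fine and matches the paper.
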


\begin{proof}
Adopt the notation in the proofs of \Cref{the condition} and \Cref{uniform fnilpotent multipliers}. Let $\mfq \subset R$ be any parameter ideal, and by replacing $\mfq$ with $\mfq^{\fbp{p^{e_0}}}$ as in the lemma, we may assume: \[ \mfq H^j_\mfm(S_i) \subset 0^{f_R}_{j,i,0} \] for any $0 \le i+j<d$.

Fix $0 \le i \le d-1$ and pick $e \in \mbn$ and $0 < j < d-i$. Then suppose $\alpha_e(\xi) \in 0^{f_R}_{j,i,e}$. For some $e' \in \mbn$, we have $f^{e'}_R(\xi)\in \ker(\alpha_{e+e'})=\im(x_i^{p^{e+e'}})$. By hypothesis, \[ \mfq^{\fbp{p^{e+e'}}} H^j_\mfm(S_{i,{e+e'}})\subset 0^{f_R}_{j,i,e+e'}\] But then $f^{e'}_R(\xi) \in 0^{f_R}_{j,i-1,e+e'}$ and hence $\xi \in 0^{f_R}_{j,i-1,e}$.

When $j=0$ we can exploit that $H^0_\mfm(R/J)$ is an ideal in $R/J$ for any ideal $J\subset R$. Note that the map $\cdot x_i: H^0_\mfm(R/(\mfq_{i-1}:x_i)) \rightarrow H^0_\mfm(S_{i-1})$ sends a class $r+(\mfq_{i-1}:x_i)$ to $x_ir + \mfq_{i-1} = x_i(r+\mfq_{i-1})$, so that if $x_i H^0_\mfm(S_i) \subset 0^{f_R}_{0,i,0}$, then \[x_i H^0_\mfm(R/(\mfq_{i-1}:x_i)) \subset 0^{f_R}_{0,i,0}.\] We have now shown the sufficient condition holds, so $\fte(R)<\infty$. 
\end{proof} 

\Cref{gwfncase} allows us to recapture the cases mentioned in the introduction. In particular, we have the following corollary.

\begin{scor}[\cite{HKSY}, \cite{Q}]\label{gcmcase}
Let $(R,\mfm)$ be a local ring of prime characteristic $p>0$. Then if $R$ is either generalized Cohen-Macaulay or weakly F-nilpotent, we have $\fte(R)<\infty$.
\end{scor}

\begin{proof}
In either case, observe $R$ is generalized weakly F-nilpotent, and apply \Cref{gwfncase}.
\end{proof}

\bibliography{ref}{} \bibliographystyle{alpha}
\end{document}